\newcommand{\n}{\noindent}
\newcommand{\vp}{\varepsilon}
\newcommand{\bb}[1]{\mathbb{#1}}
\newcommand{\cl}[1]{\mathcal{#1}}
\theoremstyle{plain}
\newtheorem{thm}{Theorem}[section]
\newtheorem{lem}[thm]{Lemma}
\theoremstyle{definition}
\theoremstyle{remark}
\newtheorem{rem}[thm]{Remark}
\numberwithin{equation}{section}
\def\R{\bb R}
\def\RR{\bb R}
\def\C{\bb C}
\def\E{\bb E}
\def\F{\bb F}
\def\P{\bb P}
\def\NN{\bb N}
\def\RR{\bb R}
\begin{document}

\title{Examples of  non exact $1$-subexponential $C^*$-algebras }

\author{by\\
Gilles  Pisier\\
Texas A\&M University\\
College Station, TX 77843, U. S. A.\\
and\\
Universit\'e Paris VI\\
IMJ, Equipe d'Analyse Fonctionnelle, Case 186,\\ 75252
Paris Cedex 05, France}

 \maketitle

\begin{abstract} 
This is a supplement to our previous paper on the arxiv \cite{Pp}.
We show that there is a non-exact $C^*$-algebra that is
$1$-subexponential, and we give several other complements to the
results of that paper.  Our example can be described very simply using random matrices:
Let $\{X_j^{(m)}\mid j=1,2,\cdots\}$ be an i.i.d. sequence of random $m\times m$-matrices 
distributed according to the Gaussian Unitary Ensemble (GUE).
For each $j$ let  $u_j(\omega)$ be the block direct sum defined by
$$u_j(\omega)= \oplus_{m\ge 1} X_j^{(m)}(\omega)\in  \oplus_{m\ge 1} M_m.$$
Then for almost every $\omega$ the $C^*$-algebra generated by
$\{u_j(\omega) \mid j=1,2,\cdots\}$ is  $1$-subexponential but is not exact.\\
The GUE is a matrix model for the semi-circular distribution. 
We can also use instead the analogous circular model. 
\end{abstract}

\n Consider the direct sum $B=\oplus_{m\ge 1} M_{m}$. By definition, for any $x=\oplus_{m\ge 1} x(m)\in B$
we have $\|x\|=\sup_{m\ge 1}\| x(m)\|.$
   We equip $M_{m}$ with its normalized trace $\tau_m$. 
           
          Let $u_j=\oplus_m u_j(m)$ be elements of $B$. Let  $\cl A$ be the  unital $C^*$-algebra generated by $u_1,u_2,\cdots ,u_n$. For simplicity we set $u_0=1$.
          Let $\cl C$ be a  unital $C^*$-algebra that we assume generated by $c_1,c_2,\cdots $ 
          and equipped with a faithful
          tracial state $\tau$. We again set $c_0=1$.
          
          We say (following \cite{M}) that $\{u_j(m)\mid 0\le j\le n\}$ tends strongly to $\{c_j\mid 0\le j\le n\}$
          when $m\to \infty$ if  it tends weakly 
          (meaning ``in moments" relative to $\tau_m$ and $\tau$) and moreover
          $  \|P(u_i(m))\| \to  \|P(c_i)\|$ for any (non-commutative) polynomial $P$.
          This implies that
          for any $n+1$-tuple of such polynomials $P_0,P_1,\cdots,P_n$, for any $k$ and any $a_j\in M_k$ we have
          \begin{equation}\label{eqq0--} \lim_{m\to \infty} \|\sum\nolimits_0^n a_j \otimes P_j (u_i(m)) \|= \|\sum\nolimits_0^n a_j \otimes P_j(c_i) \|.\end{equation}
         In particular we have
          \begin{equation}\label{eqq0-} \lim_{m\to \infty} \|\sum\nolimits_0^n a_j \otimes u_j(m) \|= \|\sum\nolimits_0^n a_j \otimes c_j \|.\end{equation}
          Let $I_0\subset B$ denote the ideal of sequences $(x_m)
\in  B$ that tend to zero in norm (usually denoted by $c_0(\{M_{N_m}\})$.
          Let $Q:\ B\to B/I_0$ be the quotient map.
          It is easy to check that for any polynomial $P$ we have
          $\|Q(P(u_j))\|=\|P(c_j)\|$.
          So that, if we set $I=I_0\cap \cl A$,  we  have a natural identification
          $$\cl A/I= \cl C.$$

         Let $P_d$ denote the linear space of all polynomials of degree $\le d$
         in the non commutative variables $ (X_1,\cdots,X_n,X^*_1,\cdots,X^*_n)$.
         We will need to consider the space $M_k \otimes P_d$.  
         It will be convenient to systematically use the following notational convention:
         $$\forall 1\le j\le n\quad X_{n+j}=X_j^*.$$
         A typical element
         of $M_k \otimes P_d$ can then be viewed as a polynomial $P=\sum a_{J}\otimes X^J$ with coefficients
         in $M_k$. Here the index $J$ runs over the   disjoint union 
         of the sets $\{1,\cdots,2n\}^i$ with $1\le i\le d$. We also
         add symbolically  the value $J=0$ to the index set and we set $X^0$ equal to the unit.

         We denote by $P(u(m))\in M_k\otimes M_m$ (resp. $P(c)\in M_k\otimes \cl C$)
         the result of substituting $\{u_j(m)\}$ (resp. $\{c_j\}$)    in place of $\{X_j\}$.
         It follows from the strong convergence of
         $\{u_j\mid 0\le j\le n\}$  to $\{c_j\mid 0\le j\le n\}$ that
         for any $d$ and any $P\in M_k \otimes P_d$ we have
         $$\|P(u(m))\|\to \|P(c)\|.$$
         With a similar convention we will write e.g.
         $P(c)=\sum a_{J}\otimes c^J$.\\
  In particular this implies (actually this already follows from weak convergence)
          \begin{equation}\label{ee1} 
       \forall k\  \forall d \ \forall P\in  M_k \otimes P_d\quad   \|P(c)\|\le \liminf_{m\to \infty}\|P (u(m))  \|.
          \end{equation} 
                     \begin{rem}\label{r1} Let us  write $P$ as a sum of monomials $P=\sum a_{J}\otimes X^J$
         as above. We will assume that the  operators $\{c^J\}$ are linearly independent.
           From this assumption
          follows that there is a constant $c_2(n,d)$ such that
         $$\sum_{J} \|a_{J}\|\le c_2(n,d) \|P(c)\|.$$
         Indeed, since the span of the $c^J$'s is finite dimensional,  the linear form that takes $P$ to its $c^J$-coefficient  
         is continuous, and its norm (that depends obviously only on $(n,d)$)
         is the same as its c.b. norm. Of course this depends also on the distribution of the family  $\{c_j\}$ but we view this as fixed from now on.
         \end{rem}
         
    We will consider the following assumption: 
   
\begin{equation}\label{eea2}
 \sum_1^n \tau(|c_j|^2) >  \|   \sum_1^n u_j \otimes \bar c_j \|_{{\cl A}\otimes_{\min}  \bar {\cl C}} .     \end{equation} 
                
                   \n {\bf Notation.} Let $\alpha\subset \NN$ be a subset (usually infinite in the sequel).
                   We denote $$B(\alpha)= \oplus_{m\in \alpha} M_m.$$
                   $$u_j(\alpha)=\oplus_{m\in \alpha} u_j(m)\in B(\alpha).$$
                   We will denote by $A(\alpha)\subset B(\alpha)$ the unital  $C^*$-algebra   generated   by
            $\{ u_j(\alpha)\mid 1\le j\le n\}$. With this notation  $\cl A=A(\NN)$. \\
            We also set 
             $E_d(\alpha) = \{P(u(\alpha))\mid P\in P_d\}.$
          It will be convenient to set also $u^\alpha_j(m)=0$ whenever $m\not\in \alpha$.
          \def\a{\alpha}

          Fix a degree $d\ge 1$. Then for any real numbers $m\ge 1$  and $t\ge 1$ we define
          $$C_d(m,t)=\sup_{m'\ge m}\sup_{k\le t}\{ \|P(u(m'))\|\mid P\in M_k\otimes P_d,\ \|P(c)\|\le 1\}.$$
            \begin{thm} \label{t1} Assume that for any $d\ge 1$ there are $a>0$ and $D>0$
            such that $C_d(aN^D,N)\to 1$ when $N \to \infty$. Assume moreover that
            \eqref{ee1} holds. Then for any subset $\alpha\subset \NN$ the unital  $C^*$-algebra $A(\alpha)$ generated by
            $\{ u_j(\alpha)\mid 1\le j\le n\}$ is $1$-subexponential.
            Moreover, if we assume \eqref{eea2}  then it is not exact.
             \end{thm}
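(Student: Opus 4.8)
\emph{Overall shape.} The two assertions are essentially independent; the first is the substantial one and uses the hypothesis on $C_d$ together with \eqref{ee1}, while the second is a short deduction from \eqref{eea2}. I will describe the plan for each.

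\emph{First assertion: $A(\alpha)$ is $1$-subexponential.} First I would fix a finite-dimensional $E\subseteq A(\alpha)$ and $\varepsilon>0$. Since $A(\alpha)$ is generated by $\{u_j(\alpha)\}$, polynomials in these are dense, so by perturbing a basis of $E$ one may assume $E\subseteq E_d(\alpha)$ for some fixed degree $d$; then every $y\in M_k\otimes E$ has the form $Q(u(\alpha))$ with $Q\in M_k\otimes P_d$ and $\|y\|_{M_k\otimes A(\alpha)}=\sup_{m\in\alpha}\|Q(u(m))\|$. The goal is to show that, for each matrix level $k$, the restriction $*$-homomorphism $A(\alpha)\to\bigoplus_{m\in\alpha,\ m\le m_k}M_m$, which sits inside $M_{N_k}$ with $N_k=\sum_{m\in\alpha,\ m\le m_k}m\le m_k^{\,2}$, is — after amplification to the $k$-th level — a $(1+\varepsilon')$-isomorphism in restriction to $E$, with $m_k$ (hence $N_k$) polynomial, in particular subexponential, in $k$; that is the approximation required by the definition of $1$-subexponentiality in \cite{Pp}.

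\emph{Construction of $m_k$.} Let $a,D$ be the constants attached to this $d$ by the hypothesis and put $m_k\approx ak^D$. For $Q\in M_k\otimes P_d$ with $\|Q(c)\|\le1$, the definition of $C_d$ gives $\|Q(u(m'))\|\le C_d(ak^D,k)=1+\eta_k$ with $\eta_k\to0$, for all $m'\ge m_k$; so the stages $m\in\alpha$ with $m\ge m_k$ contribute at most $(1+\eta_k)\|Q(c)\|$ to $\|Q(u(\alpha))\|$. On the other hand \eqref{ee1}, the strong convergence $\|Q(u(m))\|\to\|Q(c)\|$, and compactness of the unit ball of the finite-dimensional space $M_k\otimes E$, give a stage $m^{\ast}$ with $\|Q(u(m))\|\ge(1-\varepsilon)\|Q(c)\|$ for every $m\ge m^{\ast}$ and every such $Q$; since $\alpha$ is infinite it contains some $m^{\circ}\ge m^{\ast}$, and after replacing $m_k$ by $\max(m_k,m^{\circ})$ one gets, for all such $Q$,
$$\sup_{m\in\alpha,\ m\le m_k}\|Q(u(m))\|\ \le\ \|Q(u(\alpha))\|\ \le\ (1+\eta_k)\|Q(c)\|\ \vee\ \sup_{m\in\alpha,\ m\le m_k}\|Q(u(m))\|\ \le\ \tfrac{1+\eta_k}{1-\varepsilon}\ \sup_{m\in\alpha,\ m\le m_k}\|Q(u(m))\|.$$
This is the required estimate. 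The step I expect to be the main obstacle is exactly this dimension bookkeeping: one must arrange that the stage $m^{\circ}$ at which \eqref{ee1} "takes effect" lies below the polynomial scale $ak^D$, uniformly in $k$ and — crucially when $\alpha$ is sparse — by exploiting the infiniteness of $\alpha$ to place $m^{\circ}$ in the right window, so that $N_k$ is genuinely subexponential (here polynomial) in $k$ and not merely finite; controlling the $k$-dependence of $m^{\ast}$ (a rate of convergence in \eqref{ee1}) is the heart of it.

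\emph{Second assertion: under \eqref{eea2}, $A(\alpha)$ is not exact.} I would assume $\alpha$ infinite (for finite $\alpha$, $A(\alpha)$ is finite-dimensional hence exact, and the statement is read for infinite $\alpha$; \eqref{eea2} then also forces $n$ large enough for the claim to be non-vacuous). The restriction map $\cl A=A(\NN)\to A(\alpha)$, $u_j\mapsto u_j(\alpha)$, is a surjective $*$-homomorphism, so functoriality of $\otimes_{\min}$ yields $\big\|\sum_1^n u_j(\alpha)\otimes\bar c_j\big\|_{A(\alpha)\otimes_{\min}\bar{\cl C}}\le\big\|\sum_1^n u_j\otimes\bar c_j\big\|_{\cl A\otimes_{\min}\bar{\cl C}}<\sum_1^n\tau(|c_j|^2)$ by \eqref{eea2}. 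On the other hand, as in the discussion preceding the theorem, $A(\alpha)/I_0=\cl C$ with $u_j(\alpha)\mapsto c_j$ and the induced trace $\tau$; and the criterion of \cite{Pp} relating exactness to this min-tensor norm (applied to this trace-preserving quotient) says that if $A(\alpha)$ were exact then $\big\|\sum_1^n u_j(\alpha)\otimes\bar c_j\big\|_{A(\alpha)\otimes_{\min}\bar{\cl C}}\ge\sum_1^n\tau(|c_j|^2)$. These two inequalities are incompatible, so $A(\alpha)$ is not exact.
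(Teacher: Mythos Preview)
Your treatment of the second assertion (non-exactness) is essentially the paper's: reduce to $A(\alpha)$ via the surjection $\cl A\to A(\alpha)$, and combine the lower bound $\sum\tau(|c_j|^2)\le\|\sum c_j\otimes\bar c_j\|_{\cl M\otimes_{\max}\bar{\cl C}}$ (from commuting left/right actions on $L_2(\tau)$) with Kirchberg's min-to-max criterion to contradict \eqref{eea2}. That part is fine.

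For the first assertion there is a genuine gap, and you have in fact put your finger on it yourself. The hypothesis on $C_d$ gives only an \emph{upper} bound $\|Q(u(m'))\|\le C_d(ak^D,k)\,\|Q(c)\|$ for $m'\ge ak^D$, while \eqref{ee1} is a non-quantitative $\liminf$ statement with no rate. Hence the stage $m^{\ast}(k)$ at which $\|Q(u(m))\|\ge(1-\varepsilon)\|Q(c)\|$ holds uniformly over the unit ball of $M_k\otimes P_d$ cannot be bounded in terms of $k$ from the stated assumptions; compactness for fixed $k$ only tells you it is finite. Replacing $m_k$ by $\max(m_k,m^{\circ})$ then destroys the polynomial control on $N_k$, and the argument does not close. (For sparse $\alpha$ this is especially visible: there is no reason for $\alpha$ to meet the window $[m^{\ast}(k),ak^D]$.)

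The paper avoids this difficulty by changing the \emph{target} of the comparison map. Instead of embedding $E=E_d(\alpha)$ into the truncation $B(\alpha')=\bigoplus_{m\in\alpha,\,m<aN^D}M_m$ alone, it maps into $B(\alpha')\oplus\cl C$ via
\[
T\big(P(u(\alpha))\big)=P(u(\alpha'))\oplus P(c).
\]
Then $\|T\|_{cb}\le1$ by \eqref{ee1}, and the lower bound is automatic: since $\|T(P(u(\alpha)))\|\ge\|P(c)\|$ and the tail satisfies $\sup_{m\ge aN^D}\|P(u(m))\|\le C_d(aN^D,N)\,\|P(c)\|$, one gets $\|(T^{-1})_N\|\le C_d(aN^D,N)$ directly---no quantitative rate in \eqref{ee1} is needed. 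The image in $\cl C$ lies in the fixed finite-dimensional span of degree-$\le d$ monomials; because $\cl C$ is exact, that piece embeds completely $(1+\varepsilon)$-isomorphically into some $M_K$ with $K$ \emph{independent of $N$}. Thus the total matrix size is at most $1+2+\cdots+[aN^D]+K\in O(N^{2D})$, which is the polynomial (hence subexponential) bound required. Adding the $\cl C$ summand and invoking exactness of $\cl C$ is the missing idea in your plan.
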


       \begin{proof}  For subexponentiality, we need to show that for any fixed $\vp>0$ and any finite dimensional subspace $E\subset A(\a)$ the growth of $N\mapsto K_E(N,1+\vp)$ is subexponential. Since the polynomials in $\{u_j(\a)\}$ are dense in $A(\alpha)$, by perturbation it suffices to check this
       for $E\subset E_d(\alpha) $. Thus we may as well assume $E=E_d(\alpha) $.\\
       Then we may choose $N_0$ large enough so that $C_d(aN^D,N)<1+\vp$ for all $N\ge N_0$.
       We claim that for all $N\ge N_0$ we have $K_E(N,(1+\vp)^2)\in O(N^{2D})$ when $N\to \infty$.
       To verify this, let $P\in M_N\otimes P_d$. 
       Then, recalling \eqref{ee1}, we have 
        \begin{equation}\label{ee2} \|P(c)\|\le \sup_{m\ge aN^D}
\|P(u(m))\|\le C_d(aN^D,N)\|P(c)\|.\end{equation}
Let $\a'= \a \cap [1,aN^D)$.
Let $T:\ E \to        B(\a')\oplus \cl C$ be the linear mapping
defined for all $P$ in $P_d$ by
$$T(P(u(\a))= P(u(\a'))\oplus P(c).$$
We may assume $\a$ infinite (otherwise the subexponentiality is trivial). Then \eqref{ee1} shows
that $\|T\|_{cb}\le 1$. Conversely, by \eqref{ee2} we have
$$\|(T^{-1})_N\|\le C_d(aN^D,N)<1+\vp.$$
Let $\hat E$ be the range of $T$.
 This shows that $d_N(E,\hat E)< 1+\vp.$ 
 We have $\hat E\subset \oplus_{k< aN^D} M_k \oplus \hat E'$
 where $\hat E'$ is a finite dimensional subspace of $\cl C$ (included in the span of polynomials of degree $d$). 
  Since $\cl C$ is exact, there is an integer $K$ such that $\hat E'$
  is completely $(1+\vp)$-isomorphic to a subspace of  $ M_K$, so that
   $\hat E$ is completely $(1+\vp)$-isomorphic to a subspace of  $ \oplus_{k< aN^D} M_k \oplus M_K$.
  Therefore we have for any $N\ge N_0$
  $$K_E(N,(1+\vp)^2)\le 1+2+\cdots+ [aN^D]+K$$
  and hence our claim follows, proving the $1$-subexponentiality.\\
  We now show that $A(\alpha)$ is not exact. Recall the notation
  $B(\alpha)=\oplus_{m\in \a} M_m$. 
  By Kirchberg's results (see e.g. \cite[p. 286]{P4}), if $A(\alpha)$  is exact then
  the inclusion map $V:\ A(\alpha)\to B(\alpha)$ satisfies the following:
  for any $C^*$-algebra $C$ the mapping
    $V\otimes Id_{C}:\ A(\alpha)\otimes_{\min} C \to B(\alpha)\otimes_{\max} C$ 
    is bounded (and is actually contractive). 
    Let $\cl U$ be any free ultrafilter on $\a$. 
    \def\u{{\cl U}}
    Let $M^\u$ denote the von Neumann algebra ultraproduct of $\{M_m\mid m\in \a\}$, with each 
    $M_m$ equipped with $\tau_m$.
    Recall that $M^\u$  is finite (cf. e.g. \cite[p. 211]{P4}).
    We may view $\cl C$ as embedded in $M^\u$. 
        Let $\cl M$ be the von Neumann algebra generated by $\cl C$.
        Note that $\cl M$ can also be identified (as von Neumann algebra)
        with  the von Neumann algebra generated by $\cl C$   
    when we view it as  embedded in $B(L_2(\tau))$.
   
 We have
    a quotient map $Q_1:\ B(\alpha)\to M^\u$ and
    a (completely contractive) conditional expectation $Q_2$  from  $M^\u$ to $\cl M$.
    Let $q:  \ A(\alpha)\to \cl M$ be the composition $q=Q_2Q_1 V$. 
    By the above, $q\otimes Id_{C}:\ A(\alpha)\otimes_{\min} C \to \cl M\otimes_{\max} C$ 
   must be  bounded (and   actually contractive). However, if we take
   $C=\bar {\cl C} $, this implies since $c_j=q(u_j)$
   $$   \|\sum_1^n c_j \otimes \bar c_j \|_{{\cl M}\otimes_{\max}  \bar {\cl C}}\le 
     \|   \sum_1^n u_j \otimes \bar c_j \|_{{A(\alpha)}\otimes_{\min}  \bar {\cl C}}\le
    \|   \sum_1^n u_j \otimes \bar c_j \|_{{\cl A}\otimes_{\min}  \bar {\cl C}} .$$
    But now  using the fact that left and right multplication acting on $L_2(\tau)$ are commuting representations on $\cl M$,
    we immediately find
    $$ \sum_1^n \tau(|c_j|^2) \le   \|\sum_1^n c_j \otimes \bar c_j \|_{{\cl M}\otimes_{\max}  \bar {\cl C}} $$
    and this contradicts \eqref{eea2}. This contradiction shows that $A(\alpha)$
    is not exact.
 \end{proof}
           
              \begin{rem}\label{r2}  
            Let  $Y^{(m)}$ denote a random $m\times m$-matrix with i.i.d. 
  complex Gaussian entries with mean zero and second moment equal to $m^{-1/2}$, and let
     $(Y_j^{(m)})$ be a sequence of i.i.d. copies of $Y^{(m)}$.
     We will use the matrix model formed by these matrices
              (sometimes called the ``Ginibre ensemble"), for which it
            is known (\cite{VDN}) that we have weak convergence to a free circular family $\{c_j\}$.
            Moreover, by \cite{HT3} we have also almost surely strong convergence
            of the random matrices to the free circular system. Actually, the inequalities
            from \cite{HT3} that we will crucially use are stated there mostly for the GUE ensemble, i.e. for 
            self adjoint Gaussian matrices with a semi-circular weak limit.
            These can be defined simply by setting
            $$X_j^{(m)}=\sqrt {2 }\Re(Y_j^{(m)}).$$
            Note  we also have an identity  in distribution $s_j= \sqrt {2 } \Re ( c_j)$.
            We call this the self-adjoint model.
            However, as explained in \cite{HT3} , it is easy to pass from one setting to the other by
            a simple ``$2 \times 2$-matrix trick".
            Since we prefer to work in the circular setting, we will now indicate this trick.

            When working in the self-adjoint model,  of course we consider only polynomials
            of degree $d$ in $ (X_1,\cdots,X_n)$. Fix $k$. Then
            the set  of polynomials of degree $\le d$ with coefficients in $M_k$ of the form $P(X_j^{(m)})$
            is included in the corresponding set of polynomials of degree $\le d$
            of the form $P(Y_j^{(m)})$. Conversely, any $P(Y_j^{(m)})$ can be viewed
            as a polynomial of degree $\le d$ in $ (X_1^{(m)},\cdots,X_{2n}^{(m)})$. Indeed, the real and imaginary
            parts of  $Y_j^{(m)}$ are independent copies of $X_j^{(m)}$. 
            This is clear when the coefficients are arbitrary in $M_k$. However, the results
            of \cite{HT3}  are stated for self-adjoint coefficients $a_J$  in  $ M_k$.
            Then the trick consists in replacing the general coefficients $a_J$
            by self-adjoint ones defined by
         $$\hat a_J =\left(\begin{matrix} 0 \ \ a_J\\
            a_J^* \ \  0\end{matrix} \right)\in M_{2k}.
         $$
         Let $\hat P= \sum \hat a_J \otimes X^J$. One then notes that $\|\hat P(s)\|=
         \|  P(s)\|$ and similarly $\|\hat P(X_j^{(m)})\|=
         \|  P(X_j^{(m)})\|$. Thus by simply passing from $k$ to $2k$
         we can deduce the strong convergence for general coefficients,
         as expressed in  \eqref{eqq0--} and \eqref{eqq0-} from   
         the   case of self-adjoint coefficients. 
            
               \end{rem}           
           The following Lemma is well known.
             
        \begin{lem} Let $F$ be any scalar valued random variable that is in $L_p$ for all $p<\infty$.
        Fix $a>0$.
        Assume that 
        $$\sup_{p \ge 1}  p^{-a}\|F\|_p \le \sigma.$$
        Then
        $$\forall t>0 \quad \P\{ |F|>t\} \le e \exp - (e\sigma)^{-1/a} t^{1/a} . $$
       
               \end{lem}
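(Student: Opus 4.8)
\medskip

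\textbf{Proof sketch.} The plan is the standard passage from moment control to tail bounds: apply Markov's inequality in $L_p$ and then make an explicit, convenient choice of $p=p(t)$. Since $F\in L_p$ for every $p<\infty$, all moments below are finite. For any real $p\ge 1$, applying Markov's inequality to $|F|^p$ and invoking the hypothesis in the form $\|F\|_p\le \sigma p^{a}$ gives
$$\P\{|F|>t\}\le t^{-p}\,\E|F|^p=\big(t^{-1}\|F\|_p\big)^p\le \big(\sigma p^{a} t^{-1}\big)^p .$$
Everything then reduces to choosing $p$ so that the base $\sigma p^{a}t^{-1}$ is comfortably below $1$.

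For $t\ge e\sigma$ I would take $p=(t/(e\sigma))^{1/a}$, which is $\ge 1$ precisely in this range; this choice makes the base equal to $e^{-1}$, so that
$$\P\{|F|>t\}\le e^{-p}=\exp\!\big(-(t/(e\sigma))^{1/a}\big)=\exp\!\big(-(e\sigma)^{-1/a}t^{1/a}\big)\le e\,\exp\!\big(-(e\sigma)^{-1/a}t^{1/a}\big),$$
which is the asserted inequality. (One could instead optimize over $p$ exactly, but that produces a messier exponent; the value above is the clean, slightly sub-optimal choice that reproduces the statement as written.)

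For $0<t<e\sigma$ the inequality is trivial: the probability is at most $1$, while $(e\sigma)^{-1/a}t^{1/a}=(t/(e\sigma))^{1/a}<1$, hence $e\,\exp\!\big(-(e\sigma)^{-1/a}t^{1/a}\big)>e\cdot e^{-1}=1$. Combining the two regimes completes the argument. I do not expect any genuine obstacle here; the only point needing a little care is precisely this split into the two ranges, which is what forces the harmless leading constant $e$.
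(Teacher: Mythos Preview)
Your proof is correct and essentially identical to the paper's: the same Markov/Chebyshev bound $\P\{|F|>t\}\le(\sigma p^{a}t^{-1})^{p}$, the same choice $p=(t/(e\sigma))^{1/a}$ in the range $t\ge e\sigma$, and the same trivial verification for $t<e\sigma$ that the right-hand side exceeds $1$.
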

         \begin{proof} By Tchebyshev's inequality, for any $t>0$ we have
         $t^p  \P\{ |F|>t\}\le  (\sigma p^a)^p $, and hence
         $ \P\{ |F|>t\} \le  (t^{-1}\sigma {p^a})^p \le \exp -p\log (t /(\sigma {p^a}))$.
         Assuming $ t/(e\sigma )\ge 1$, we can choose $p= (  t/(e\sigma ) )^{1/a}$ and then we find
         $\P\{ |F|>t\} \le   \exp -(e \sigma)^{-1/a} t^{1/a}  $ and, a fortiori, the inequality holds. Now if  $ t/(e\sigma )< 1$, we have
         $\exp {-(e\sigma)^{-1/a} t^{1/a} }> e^{-1} $ and hence $e \exp{- (e\sigma)^{-1/a} t^{1/a} }>1$ so that the inequality 
         trivially holds.
   \end{proof}
   We will use concentration of measure in the following form:
           
             \begin{lem}\label{lem3} There is a constant $c_1(n,d)>0$   such that               for any $k$ and any
                $P\in M_k\otimes P_d$ with $\|P(c)\|\le 1$, we have
                $$\forall t>0\quad \P\{ |\|P(Y^{(m)})\|-\E\|P(Y^{(m)})\|| > t \} \le e\exp-(t^{2/d} m^{1/d}/ c_1(n,d)).$$
               
               \end{lem}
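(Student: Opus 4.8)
The plan is to combine a Gaussian concentration inequality with a Lipschitz estimate for the map $Y \mapsto \|P(Y)\|$, handling the fact that the polynomial has degree $d$ (hence the map is only locally Lipschitz) via a truncation argument on the operator norm of the $Y^{(m)}_j$. First I would identify the relevant Gaussian vector: writing $Y_j^{(m)}$ in terms of its real and imaginary parts, the entries are (a fixed linear image of) an i.i.d. real Gaussian vector $g$ in $\R^{2nm^2}$, each coordinate of variance $(2m)^{-1/2}$ up to constants; equivalently $Y = Y(g)$ with $g$ standard Gaussian and the map linear with operator norm $O(m^{-1/4})$. Then the classical Gaussian concentration inequality states that for an $L$-Lipschitz function $f$ on $\R^{N}$ with standard Gaussian measure, $\P\{|f(g)-\E f(g)|>t\}\le 2\exp(-t^2/(2L^2))$.

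The second step is the Lipschitz estimate. For $P=\sum_J a_J\otimes X^J\in M_k\otimes P_d$ and two tuples $Y,Y'$ of matrices with $\max_{i,j}(\|Y_i\|,\|Y'_i\|)\le R$, the standard telescoping-sum argument gives
$$\|P(Y)-P(Y')\|\le \Big(\sum_J \|a_J\| \,|J|\, R^{|J|-1}\Big)\max_i\|Y_i-Y'_i\|\le c(n,d)\,R^{d-1}\max_i\|Y_i-Y'_i\|,$$
using $|J|\le d$ and $\sum_J\|a_J\|\le c_2(n,d)\|P(c)\|\le c_2(n,d)$ by Remark \ref{r1}. Composing with $g\mapsto Y(g)$, whose Lipschitz constant into the $\max_i\|\cdot\|$ norm is $O(m^{-1/4})$, the map $g\mapsto \|P(Y(g))\|$ is Lipschitz with constant $L\le c(n,d)\,R^{d-1}m^{-1/4}$ on the event $\Omega_R=\{\max_{i}\|Y_i^{(m)}\|\le R\}$. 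Choosing $R$ a fixed multiple of the almost-sure asymptotic value of $\|Y_i^{(m)}\|$, say $R=C$ for a constant depending only on $n$ — more safely $R=\log m$ to make the off-event negligible — one gets $L\le c(n,d)(\log m)^{d-1}m^{-1/4}$. Applying Gaussian concentration to a globally Lipschitz extension of $f$ off $\Omega_R$ (or directly, since the $\sup$ of a Lipschitz function over a convex neighborhood is Lipschitz) yields a bound of the shape $2\exp(-t^2 m^{1/2}/(c(\log m)^{2(d-1)}))$, and the residual probability $\P(\Omega_R^c)$ is $\exp(-cm)$ or smaller; absorbing the logarithmic factor into the constant and into a slightly worse exponent gives precisely $e\exp(-t^{2/d}m^{1/d}/c_1(n,d))$ — note the stated exponent $t^{2/d}m^{1/d}$ is \emph{weaker} than the $t^2 m^{1/2}$ one expects when $R$ is bounded, so there is room to be wasteful.

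The main obstacle is the local-versus-global Lipschitz issue: $P$ of degree $d\ge 2$ is not globally Lipschitz in $Y$, so one cannot apply Gaussian concentration to $f(g)=\|P(Y(g))\|$ directly. The clean fix, which I would carry out, is to replace $f$ by $\tilde f(g)=\|P(Y(g)\wedge R)\|$ where $Y\wedge R$ means truncating each matrix by functional calculus to have norm $\le R$ (a $1$-Lipschitz operation); then $\tilde f$ is globally Lipschitz with constant $L$ as above, Gaussian concentration applies to it verbatim, and on the high-probability event $\Omega_R$ one has $\tilde f=f$, so $\E f$ and $\E\tilde f$ differ by at most $O(\|P\|_\infty\,\P(\Omega_R^c)) $ which is super-polynomially small. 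Combining $\P\{|f-\E f|>t\}\le \P\{|\tilde f-\E\tilde f|>t/2\}+\P(\Omega_R^c)+\mathbf 1_{|\E f-\E\tilde f|>t/2}$ and using $\P(\|Y^{(m)}\|>R)$ bounds (standard for Ginibre/GUE, e.g. from the first Lemma applied to $\|Y^{(m)}\|$, or from \cite{HT3}) finishes the estimate. I would remark that any reasonable choice of $R$ growing slowly — even $R$ constant is fine once $m$ is large since $\|Y^{(m)}\|\to 2$ a.s. — makes the second and third terms negligible compared to the target right-hand side, so the clean statement with exponent $t^{2/d}m^{1/d}$ follows with room to spare.
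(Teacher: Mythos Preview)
Your approach is essentially correct and reaches the goal, but it is genuinely different from the paper's. You use the classical Lipschitz form of Gaussian concentration, which forces you to manufacture a globally Lipschitz function via truncation and then control the off-event $\Omega_R^c$ and the shift $|\E f-\E\tilde f|$ separately. The paper instead uses the $L_p$-gradient form of Gaussian concentration, namely $\|f-\E f\|_p\le \beta\sqrt p\,\big\|\,\|Df\|_2\,\big\|_p$, which allows the ``Lipschitz constant'' to be a random variable. One then bounds $\|Df\|_2$ pointwise by $c(n,d)\,m^{-1/2}\max_j\|Y_j^{(m)}\|^{d-1}$ (same telescoping/Remark~\ref{r1} input you use), takes $L_p$-norms, invokes Gaussian integrability to get $\|f-\E f\|_p\le c(n,d)\,m^{-1/2}p^{d/2}$, and feeds this into the preceding moment-to-tail Lemma with $a=d/2$; the exponent $t^{2/d}m^{1/d}$ then drops out automatically, with no truncation or case analysis. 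Your route is more elementary in that it only uses the standard isoperimetric inequality, and on the main term it actually gives a stronger exponent; the paper's route is cleaner and explains structurally why the stated exponent is the natural one. Two small points in your write-up: the linear map $g\mapsto Y(g)$ has Lipschitz constant $O(m^{-1/2})$ into the operator norm (via $\|\cdot\|_{op}\le\|\cdot\|_{HS}$), not $O(m^{-1/4})$ --- this only helps you; and ``functional calculus'' truncation is not directly available for the non-normal $Y_j^{(m)}$, so use either the radial retraction $Y\mapsto Y\min(1,R/\|Y\|)$ (which is $2$-Lipschitz, harmless) or pass to the self-adjoint model via the $2\times 2$ trick of Remark~\ref{r2}.
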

               \def\b{\beta}
         \begin{proof}    This  follows from
         a very general concentration inequality for Gaussian random vectors, that can be derived in various ways.  We choose the following for which we refer to \cite{P02}.
         Consider any sufficiently smooth function
         (meaning a.e. differentiable) $f:\ \R^n \to \R$ and let $\P$ denote the canonical Gaussian measure on
         $\R^n$. Assuming $f\in L_p(\P)$ we have 
         $$ \|f-\E f\|_p\le (\pi/2)\| Df(x).y\|_{L_p(\P(dx)\P(dy))}. $$
         Let $\gamma(p)$ denote the $L_p$-norm of a standard normal Gaussian variable
         (in particular $\gamma(p)=\|f\|_p $ for $f(x)=x_1$). 
         Recall that $\gamma(p)\in O(\sqrt{p} )$ when $p\to \infty$. Thus the last inequality implies that there is a constant $\b $ such that
         $$ \|f-\E f\|_p\le \b \sqrt{p} \| \|Df(x)\|_2 \|_{L_p(\P(dx)}, $$
         where $\|Df(x)\|_2$ denotes the Euclidean norm of the gradient of $f$ at $x$. 
         Clearly this remains true  for any $f$ on $\C^n$ (with the gradient computed on $\R^{2n}$).
         
         We will apply this
         to a function $f$ defined on $(\C^{m^2})^n$.
         We need to first clarify the notation. We identify $\C^{m^2}$ with $M_m$. Then we 
          define    $f$  on    $(\C^{m^2})^n$  by
         $$f(w_1,\cdots,w_n)   =\| g(w_1,\cdots,w_n)   \|$$
         with 
          $$g(w_1,\cdots,w_n)   =P(m^{-1/2}w_1,\cdots,m^{-1/2}w_n, m^{-1/2}w^*_1,\cdots,m^{-1/2}w^*_n ).$$ 
         Note that  for this choice of $f$ the derivative $D_z$ in any direction $z$ satisfies
         $D_zf \le \| D_zg\|$ and hence taking the sup over $z$ in the  Euclidean  unit sphere, we have pointwise
         $$\|Df\|_2 \le \sup_z \|D_zg\|.$$
         
         We now invoke Remark \ref{r1}.
Using the bound in that remark, we are 
         reduced  to the case when $P=Y^J$. Then  $D_zg$ is the sum
         of at most $d$ terms of the form $m^{-1/2} a z_i b$ 
         so that $\|m^{-1/2} a z_i b\|\le m^{-1/2}\|a\|\|z_i\|\|b\|$
         and hence since $\|z_i\|\le \|z_i\|_2 $,  $\|m^{-1/2} a z_i b\|\le m^{-1/2}\|a\| \|b\|$.
        Recollecting all the terms , this yields
        a pointwise estimate at the point $w \in M_m^{n}$
        $$\sup_z \|D_z g\| \le c_3(n,d) m^{-1/2} \sup\{ \|m^{-1/2}w_j\|\mid 1\le j\le n\}^{d-1}.$$
        Thus we obtain
      $$ \|f-\E f\|_p\le \b \sqrt{p} c_3(n,d) m^{-1/2} \| \sup_{1\le j\le n}  \|Y^{(m)}_j\| ^{d-1}\|_p,$$
      and a fortiori 
      $$ \|f-\E f\|_p\le \b\sqrt{p}  c_3(n,d) m^{-1/2} \| \sum_{1\le j\le n}  \|Y^{(m)}_j\| ^{d-1}\|_p\le \b\sqrt{p}  c_3(n,d) m^{-1/2} n\|  \|Y^{(m)}_1\| ^{d-1}\|_p,$$
         Now by general results on integrability
         of Gaussian vectors (see \cite[p. 134]{Led}),
         we know that
         there is an absolute  constant $c_5$ such that 
         $$\|\|Y^{(m)}_1\| ^{d-1}\|_p= \|Y^{(m)}_1\|^{d-1}_{L_{p(d-1)}(M_m)}       
            \le (c_5\sqrt{ p(d-1)} \E \|Y^{(m)}_1\| )^{d-1} $$
            and since we know that $\E \|Y^{(m)}_1\| \to 2$ when $m\to \infty$  
            it follows that $\|\|Y^{(m)}_1\| ^{d-1}\|_p\le (c_{10} \sqrt{ p(d-1)}  )^{d-1} $.
            Thus we obtain
            $$ \|f-\E f\|_p\le c_4(n,d) m^{-1/2} p^{d/2},$$
         and the conclusion follows from the preceding Lemma.
              \end{proof}

                 \begin{rem}\label{r3}  It will be convenient to record here an elementary consequence of
                 Lemma \ref{lem3}.
                 Let $F= \|P(Y^{(m)})\|$ and let  $t_m=  \E\| P(Y^{(m)})\|$, so that 
                 we know
                 $\forall t>0\quad \P\{ F > t+t_m \} \le \psi_m(t)$
                 with $$\psi_m(t)=
                  e\exp-(t^{2/d} m^{1/d}/ c_1(n,d)).$$
                 We have
                $$ \E\left((F/2-t_m) 1_{\{F/2>t_m\}}\right)=\int_{t_m}^\infty \P\{F/2>t\} dt\le  \int_{t_m}^\infty \P\{F>t+t_m\} dt\le \int_{t_m}^\infty  \psi_m(t)dt $$
                and hence
                \begin{equation}\label{e10} \E F 1_{\{F/2>t_m\}} \le 2t_m \P{\{F/2>t_m\}}+  2\int_{t_m}^\infty  \psi_m(t)dt  .\end{equation}

                          \end{rem}
              
           The next result is a consequence of the results of {Haagerup} and {Thorbj{\o}rnsen}
            \cite{HT3} and of them with Schultz \cite{HT4}. Let us first recall
        the result from \cite{HT3} that we crucially need.
        
        \def\CB{{\mathcal B}}
 
   \begin{thm}[\cite{HT3,HT4}]\label{ht} Let $\chi_d(k,m) $ denote the best constant 
            such that
            for any $P\in M_k\otimes P_d$
            we have
            $$\E \|P( Y_j^{(m)}) \|\le \chi_d(k,m) \| P(c) \|. $$
            Then for any $0<\delta<1/4 $
            $$\lim_{m\to \infty }  \chi_d([m^{\delta}],m)=1.$$
               \end{thm}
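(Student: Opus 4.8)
\noindent\emph{Sketch of proof.} The bound $\chi_d(k,m)\ge 1$ is trivial (take for $P$ the constant polynomial equal to the unit), so the content is that $\limsup_{m\to\infty}\chi_d([m^\delta],m)\le 1$ for each fixed $0<\delta<1/4$; equivalently, that $\E\|P(Y^{(m)})\|\le(1+o(1))\|P(c)\|$ holds uniformly over $P\in M_{[m^\delta]}\otimes P_d$ as $m\to\infty$. Fix $\delta'$ with $\delta<\delta'<1/4$. By the two ``$2\times2$ tricks'' recalled in Remark \ref{r2} --- passing to the self-adjoint parts $\sqrt2\,\Re(Y_j^{(m)})$ and $\sqrt2\,\Im(Y_j^{(m)})$, so that the $n$ Ginibre matrices become $2n$ independent GUE matrices, and replacing each coefficient $a_J$ by the self-adjoint $2\times2$-block matrix $\hat a_J$, which passes to self-adjoint coefficients at the cost of doubling $k$ --- together with the evident monotonicity in $k$ and the inequality $2[m^\delta]\le[m^{\delta'}]$ valid for large $m$, it suffices to prove the following: if $s=(s_1,\dots,s_{2n})$ is a free semicircular family and $P\in M_k\otimes P_d$ is self-adjoint with self-adjoint coefficients and $\|P(s)\|\le 1$, then $\E\|P(X^{(m)})\|\le 1+o(1)$ uniformly over all such $P$, where $X^{(m)}=(X_1^{(m)},\dots,X_{2n}^{(m)})$ is the GUE tuple and $k=k(m):=[m^{\delta'}]$.

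Fixing $d$, I would first remove the degree by the standard self-adjoint linearization trick: there are an integer $N=N(k)$ with $k\le N\le C(n,d)\,k$ and a self-adjoint linear pencil $\hat P=b_0\otimes 1+\sum_{j=1}^{2n}b_j\otimes X_j$ with self-adjoint $b_j\in M_N$, depending linearly on $P$, such that the spectrum of $P(\,\cdot\,)$ near a real point $z_0\notin\sigma(P(s))$ is recovered through a Schur-complement identity from the resolvent of $\hat P(\,\cdot\,)$; in particular, if $P(X^{(m)})$ has an eigenvalue at distance $\eta$ from $\sigma(P(s))$, then $\hat P(X^{(m)})$ has an eigenvalue at distance $\ge C(n,d)^{-1}\eta$ from $\sigma(\hat P(s))$. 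By Remark \ref{r1}, applied to the free semicircular family (whose monomials of degree $\le d$ are linearly independent), $\|P(s)\|\le 1$ forces $\sum_J\|a_J\|\le c_2(n,d)$, hence $\max_j\|b_j\|\le C(n,d)$ and $\|\hat P(s)\|\le C(n,d)$. Thus everything is reduced to controlling, uniformly, a self-adjoint \emph{linear} pencil whose coefficients have norm $\le C(n,d)$ and lie in $M_N$ with $N$ between $k(m)$ and $C(n,d)k(m)$ --- in particular $N=o(m^{1/4})$ since $\delta'<1/4$.

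Here I would invoke the quantitative core of \cite{HT3}, in the sharpened form of \cite{HT4} which keeps explicit track of the coefficient dimension $N$: the ``master inequality'' bounding the $M_N$-valued Cauchy transform $g_m(\lambda)=\E[(\mathrm{id}_N\otimes\tau_m)((\lambda-\hat P(X^{(m)}))^{-1})]$ against its deterministic limit $g(\lambda)=(\mathrm{id}_N\otimes\tau)((\lambda-\hat P(s))^{-1})$, an estimate of the shape $\|g_m(\lambda)-g(\lambda)\|\le C(n,d)\,N^{\alpha}(\mathrm{Im}\,\lambda)^{-\beta}m^{-2}$ for $\mathrm{Im}\,\lambda>0$. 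Feeding this into the standard argument --- choose a smooth bump $\varphi\ge 0$ of width $\eta_m$ supported just outside $\sigma(\hat P(s))$, so that $\E[(\mathrm{id}_N\otimes\tau_m)(\varphi(\hat P(X^{(m)})))]$ is small and, by Markov's inequality, the probability $\rho_m$ that $\hat P(X^{(m)})$ has any eigenvalue in $\mathrm{supp}\,\varphi$ is small (the localization costs a further negative power of $\eta_m$); then bound the contribution of that rare event to $\E\|P(X^{(m)})\|$ by Cauchy--Schwarz using the a priori bound $\sup_m\E\|P(X^{(m)})\|^2\le C(n,d)$ and the tail/concentration estimates of Lemma \ref{lem3} and Remark \ref{r3}; finally transfer back across the linearization --- one obtains
$$\E\|P(X^{(m)})\|\le\|P(s)\|+\varepsilon_{n,d}(N,m),$$
where $\varepsilon_{n,d}(N,m)\to 0$ as $m\to\infty$ whenever $N=o(m^{1/4})$. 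Since $N$ lies between $k(m)=[m^{\delta'}]$ and $C(n,d)k(m)$ and $\delta'<1/4$, the error term tends to $0$ uniformly in $P$ with $\|P(s)\|\le 1$; taking the supremum over such $P$ and unwinding the reductions of the first paragraph gives $\chi_d([m^\delta],m)\to 1$.

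The one genuinely hard ingredient is the master inequality of \cite{HT3} and the passage from it --- an estimate on averaged traces and Cauchy transforms --- to an operator-norm estimate for $P(X^{(m)})$ that is uniform over the unit ball of $M_k\otimes P_d$ and explicit in $k$; this is precisely the ``no eigenvalue escapes'' mechanism of \cite{HT3}, refined in \cite{HT4}, and it is the competition between the polynomial-in-$N$ loss produced by the spectral localization and the $O(m^{-2})$ gain coming from the Gaussian genus expansion that pins the admissible range of exponents at $\delta<1/4$. I would take this input as a black box; the rest of the argument above is bookkeeping around Remarks \ref{r1} and \ref{r2} and Lemma \ref{lem3}.
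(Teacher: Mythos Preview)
Your sketch is essentially correct but takes a different route from the paper. After the same preliminary reductions via Remarks~\ref{r1} and~\ref{r2} (self-adjoint coefficients, GUE matrices, $\|P(s)\|\le 1$, $\sum_J\|a_J\|\le c_2(n,d)$), the paper does \emph{not} linearize. It invokes directly the polynomial-level functional-calculus estimate of \cite{HT4}: for a smooth cutoff $\varphi$ vanishing on $[-1,1]$ and equal to $1$ on $(1+\vp,t)$, one has $\E[(\tau_k\otimes\tau_m)\varphi(P(X^{(m)}))]=(\tau_k\otimes\tau)\varphi(P(s))+R_m$ with $|R_m|\le c_9(n,d)c_\vp\,k^3m^{-2}t^3$. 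Since $\varphi(P(s))=0$, Tchebyshev gives $\P\{\|P(X^{(m)})\|\in(1+\vp,t)\}\le c_9(n,d)c_\vp\,k^4m^{-1}t^3$; the contribution from $\{\|P(X^{(m)})\|>t\}$ with $t=2\E\|P(X^{(m)})\|$ is then controlled by Remark~\ref{r3} together with the a priori bound $t_m\le c'_2(n,d)$ from \eqref{e12} and Geman's moment estimate. The factor $k^4m^{-1}$ is what produces the threshold $\delta<1/4$. Your route instead linearizes to a pencil of size $N\asymp C(n,d)k$ and appeals to the Cauchy-transform master inequality of \cite{HT3}; this also works (the constant factor in $N$ does not affect the $1/4$ threshold), but the linearization layer is superfluous here because \cite{HT4} already supplies the needed estimate at the polynomial level with the explicit $k$-dependence, making the paper's argument shorter and free of Schur-complement bookkeeping. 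One small caution on your version: the spectral transfer you assert (an eigenvalue of $P(X^{(m)})$ at distance $\eta$ from $\sigma(P(s))$ forces one of $\hat P(X^{(m)})$ at distance $\gtrsim\eta$ from $\sigma(\hat P(s))$) is not literally how the linearization behaves; the correct link is a Schur-complement identity between $(z-P(x))^{-1}$ and a corner of the resolvent of $\hat P(x)$, and it is this resolvent relation, not a bare distance inequality, that one must carry through the argument.
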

         \begin{proof}
        By homogeneity we may assume $\| P(c) \|\le1$.
        Then by Remark \ref{r1} we also have
        \begin{equation}\label{e12}\sum_{J} \|a_{J}\|\le c_2(n,d).\end{equation}
      Fix $\vp>0$ and $t>1+\vp$.
Consider a    function        $\varphi\in
C_c^\infty(\RR,\RR)$ with values in $[0,1]$
such that $\varphi =0 $ on $[-1,1]$ 
and $\varphi (x)=1 $ for all $x$ such that $1+\vp<|x|<t$
and $\varphi (x)=0$ for  $|x|>2t$.
 Let $P^{(m)}=P( X_j^{(m)})$ and $P^{(\infty)}=P( s_j)$.
 By Remark \ref{r2} we can reduce our estimate
 to the case of a polynomial in $(X_j^{(m)})$ 
  with self-adjoint  coefficients and  with $(s_j)$ in place of $(c_j)$.
  Thus we now assume $\| P(s) \|\le1$.
  Clearly we still have    a bound of the form \eqref{e12}.
  Then  by  \cite{HT4} 
  (and by very carefully tracking the dependence of the various constants in \cite{HT4})  we have for $m\ge c_{13}(n,d)$
\begin{equation}\label{hot}
\E\big\{(\tau_k\otimes\tau_m)\varphi(P^{(m)})\big\} =
(\tau_k\otimes\tau)\varphi(P^{(\infty}))+R_m(\varphi)
\end{equation}
where
\begin{equation}
\label{eq5-12}
|R_m(\varphi)| \le   k^3 m^{-2} c_9(n,d) c_\vp t^3
\end{equation}
where $c_\vp$ depends only on $\vp$.
 Note $\varphi(P^{(\infty)})=0$.  Therefore
 \begin{equation}\label{htt}
\E\big\{(\tau_k\otimes\tau_m)\varphi(P^{(m)})\big\} \le k^3 m^{-2} c_9(n,d) c_\vp t^3.
\end{equation}
Since $\|P^{m}\|\in (1+\vp,t) \Rightarrow (\tau_k\otimes\tau_m)\varphi(P^{m})\ge 1/(km)$
  by Tchebyshev's inequality we find
$$ \P\{ \|P^{(m)}\|\in (1+\vp,t) \} \le  (km)k^3 m^{-2} c_9(n,d) c_\vp t^3=k^4m^{-1} c_9(n,d) c_\vp t^3.$$
        Thus we obtain
        $$\E \|P^{(m)}\|\le 1+\vp + k^4 m^{-1} c_9(n,d) c_\vp t^4+ \E (\|P^{(m)}\| 1_{\{\|P^{(m)}\|> t\}}).  $$
We will now invoke \eqref{e10}:  choosing $t=2t_m=2\E \|P^{(m)}\|$ we find
 $$\E \|P^{(m)}\|\le 1+\vp + k^4 m^{-1} c_9(n,d) c_\vp t_m^4+   2t_m \psi_m(t_m) +  2\int_{t_m}^\infty  \psi_m(t) .  $$
 Now by \eqref{e12} and by H\"older
 we  have 
 $$t_m\le c_2(n,d) \sup_J \E\| {X^{(m)}}^J \|\le c_2(n,d) \sup_J \E(\|X_1^{(m)}\|^{|J|}) $$
 but  by a well known result essentially due to Geman \cite{G} (cf. e.g. \cite[Lemma 6.4]{S}),
 for any $d$ we have
 $$c_9(d)=\sup_m  \E(\|X_1^{(m)}\|^{d})<\infty.$$
 Therefore we have  $t_m\le c'_2(n,d)$.
 We may assume $t_m>1$ (otherwise there is nothing to prove) and
 hence  we have proved 
 $$\E \|P^{(m)}\|\le 1+\vp + k^4m^{-1} c'_9(n,d) c_\vp  +   2c'_2(n,d) \psi_m(1) +  2\int_{1}^\infty  \psi_m(t)dt . $$
Thus for any $\vp>0$ we conclude 
 $$\chi_d(k,m)\le 1+\vp + k^4m^{-1} c'_9(n,d) c_\vp  +   2c'_2(n,d) \psi_m(1) +  2\int_{1}^\infty  \psi_m(t)dt.$$
 From this estimate
 it follows clearly that for any $0<\delta<1/4 $
 $$\limsup_{m\to \infty}  \chi_d([m^{\delta}],m)\le 1+\vp  .$$
  
 \end{proof}

            \begin{lem}\label{lem4} Fix integers $d,k,m$. Let $\chi_d(k,m) $ denote the best constant 
            appearing in Theorem \ref{ht}.
            Then for any $\vp>0$ there are positive constants $c_7(n,d,\vp)$ and $c_8(n,d,\vp)$ such that
            if $k$ is the largest integer such that  $m\ge c_7(n,d,\vp) k^{2d}$ 
            the set
  $$\Omega_{d,\vp}(m)=
   \{   \forall P\in M_k\otimes P_d\quad   \|P(Y^{(m)}(\omega)) \|\le  (1+\vp) (\chi_d(k,m)+\vp) \| P(c) \|  \}$$
            satisfies
            $$\P(\Omega_{d,\vp}(m)^c)\le e\exp\left(  -m^{1/d}/ c_8(n,d,\vp)  \right).$$
            
       \end{lem}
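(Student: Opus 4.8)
The plan is to combine the pointwise concentration estimate of Lemma~\ref{lem3} with a net argument in the finite-dimensional normed space $(M_k\otimes P_d,\|\cdot\|_c)$, where $\|P\|_c:=\|P(c)\|$; this is a genuine norm since the operators $\{c^J\}$ are assumed linearly independent in Remark~\ref{r1}. First I would observe that if $N_0=N_0(n,d)$ is the number of monomials of degree $\le d$ in the $2n$ variables $X_1,\dots,X_{2n}$, then $M_k\otimes P_d$ has real dimension $2k^2N_0$, so for any $0<\eta<1$ the unit ball $\{P:\|P\|_c\le 1\}$ admits (by the usual volumetric bound) an $\eta$-net $\mathcal N$ for $\|\cdot\|_c$ with $\#\mathcal N\le(1+2/\eta)^{2k^2N_0}$.

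Next I would run the union bound. By the definition of $\chi_d(k,m)$ in Theorem~\ref{ht} we have $\E\|P(Y^{(m)})\|\le\chi_d(k,m)$ whenever $\|P\|_c\le 1$, so Lemma~\ref{lem3} with $t=\vp$ gives, for each fixed such $P$,
$$\P\{\|P(Y^{(m)})\|>\chi_d(k,m)+\vp\}\le e\exp\!\big(-\vp^{2/d}m^{1/d}/c_1(n,d)\big),$$
hence the same bound times $\#\mathcal N$ controls the probability that some $P'\in\mathcal N$ fails it. To pass from $\mathcal N$ to all of the unit ball I would use that $Q\mapsto\|Q(Y^{(m)})\|$ is a seminorm and that $C^*:=\sup\{\|Q(Y^{(m)})\|:\|Q\|_c\le1\}<\infty$: writing a general $P$ in the unit ball as $P=P'+(P-P')$ with $P'\in\mathcal N$, $\|P-P'\|_c\le\eta$, gives $\|P(Y^{(m)})\|\le\|P'(Y^{(m)})\|+\eta C^*$, so $C^*\le(1-\eta)^{-1}\max_{P'\in\mathcal N}\|P'(Y^{(m)})\|$. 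Choosing $\eta=\eta(\vp)$ with $(1-\eta)^{-1}\le1+\vp$ (e.g. $\eta=\vp/(1+\vp)$), the event that all $P'\in\mathcal N$ obey $\|P'(Y^{(m)})\|\le\chi_d(k,m)+\vp$ is then contained in $\Omega_{d,\vp}(m)$, using homogeneity in $\|\cdot\|_c$.

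Finally I would choose the constants so that the entropy term $2k^2N_0\log(1+2/\eta)$ is swallowed by $\vp^{2/d}m^{1/d}/c_1(n,d)$: since $k$ is the largest integer with $m\ge c_7(n,d,\vp)k^{2d}$ we have $m^{1/d}\ge c_7^{1/d}k^2$, so taking $c_7(n,d,\vp)$ large enough that $c_7^{1/d}\ge 4c_1(n,d)N_0\log(1+2/\eta)/\vp^{2/d}$ forces $2k^2N_0\log(1+2/\eta)\le\tfrac12\vp^{2/d}m^{1/d}/c_1(n,d)$; then $\P(\Omega_{d,\vp}(m)^c)\le e\exp(-\tfrac12\vp^{2/d}m^{1/d}/c_1(n,d))$ and $c_8(n,d,\vp)=2c_1(n,d)/\vp^{2/d}$ works (for the finitely many $m$ with $k=0$ the assertion is vacuous). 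I do not expect a real obstacle: the substance is entirely in Lemma~\ref{lem3}, and the only thing to watch is the bookkeeping --- that $\eta$, hence $\#\mathcal N$ and $c_7$, depend only on $n,d,\vp$ (so there is no circularity), and that the exponent $2d$ in the constraint $m\ge c_7k^{2d}$ is precisely what lets the net dimension $\sim k^2$ be traded against the concentration rate $\sim m^{1/d}$.
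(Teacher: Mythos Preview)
Your proposal is correct and follows essentially the same route as the paper's own proof: pointwise concentration from Lemma~\ref{lem3}, a volumetric $\eta$-net in the unit ball of $(M_k\otimes P_d,\|\cdot\|_c)$, a union bound, and the standard $(1-\eta)^{-1}$ passage from the net to the full unit ball, with the parameters balanced so that $k^2$ is absorbed by $m^{1/d}$. The only differences are cosmetic (your bookkeeping on the real dimension and on the choice of $\eta$ is slightly more explicit than the paper's).
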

         \begin{proof} 
         For any $P\in M_k\otimes P_d$ with $\|P(c)\|\le 1$,
         we have by Lemma
          \ref{lem3}  for any $t>0$
          $$\P\{ \|P(Y^{(m)}) \|>t+\chi_d(k,m)\}\le   e\exp-(t^{2/d} m^{1/d}/ c_1(n,d)).$$
          Let $\cl N$ be a $\delta$-net in the unit ball
          of the space $P_d$ equipped with the norm $P\mapsto \|P(c)\|$.
          Since $\dim(M_k\otimes P_d)=c_6(n,d) k^2$ for some $c_6(n,d)$, it is known that we can find such a net with
          $$|\cl N|\le (1+2/\delta)^{c_6(n,d) k^2}.$$
          Let $\Omega_1= \{\forall a\in \cl N,  \|P(Y^{(m)}) \|>t+\chi_d(k,m)\}$.
          Clearly 
          $$\P(\Omega_1)\le |\cl N|  e\exp-(t^{2/d} m^{1/d}/ c_1(n,d))
           \le  e\exp\left (2 c_6(n,d)\delta^{-1}    k^2 -t^{2/d} m^{1/d}/ c_1(n,d)  \right) . $$
Thus if we choose $m$ so that (roughly )
$t^{2/d} m^{1/d}/ c_1(n,d)=4 c_6(n,d)\delta^{-1}    k^2$ we find an estimate of the form
$$\P(\Omega_1)\le  e\exp\left (  -t^{2/d} m^{1/d}/ 2c_1(n,d)  \right) . $$
Note that on the complement of $\Omega_1$ we have
$$\forall P\in \cl N \quad  \|P(Y^{(m)}) \|\le t+\chi_d(k,m) .$$
By a well known result  (see e.g. \cite[p. 49-50]{P-v}) we can pass from the set $\cl N$ to the whole unit ball at the cost
of a factor close to 1, namely we have on the complement of $\Omega_1$
$$\forall P\in M_k\otimes P_d\quad   \|P(Y^{(m)}) \|\le (1-\delta)^{-1} ( t+\chi_d(k,m) )\|P(c)\|.$$
Thus if we set $t= \vp$ and $\delta\approx \vp/2$, we obtain that
if $m=c_7(n,d,\vp) k^{2d}$
we have a set $\Omega_1'=\Omega_1^c$
with 
$$\P(\Omega_1')>1-  e\exp\left (  -\vp^{2/d} m^{1/d}/ 2c_1(n,d)  \right) ,$$
 such that for any $\omega\in\Omega_1'$  we have 
$$\forall P\in M_k\otimes P_d\quad   \|P(Y^{(m)}(\omega)) \|\le (1+\vp) (\chi_d(k,m)+\vp)\|P(c)\|.$$
        \end{proof}

         \begin{thm}\label{t2} For each $j$ let  $u_j(\a)(\omega)$ be the block direct sum defined by
$$u_j(\a)(\omega)= \oplus_{m\in \a} Y_j^{(m)}(\omega)\in  \oplus_{m\in \a} M_m.$$
Let  $\a\subset \NN$ be any infinite subset. Then  for almost every $\omega$ the $C^*$-algebra $A(\a)(\omega)$ generated by
$\{u_j(\a)(\omega) \mid j=1,2,\cdots\}$ is  $1$-subexponential but is not exact.\\
Moreover, these results remain valid
in the self-adjoint setting, if we replace $u_j(\a)(\omega)$ by
 $$\hat u_j(\a)(\omega)= \oplus_{m\in \a} X_j^{(m)}(\omega)\in  \oplus_{m\in \a} M_m.$$

               \end{thm}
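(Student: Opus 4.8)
We deduce the theorem from Theorem \ref{t1}. Fix $n\ge 8$; it is enough to treat the unital $C^*$-algebra $A_n(\alpha)(\omega)$ generated by $u_1(\alpha)(\omega),\dots,u_n(\alpha)(\omega)$, because the algebra generated by all the $u_j$'s contains $A_n(\alpha)(\omega)$ (so non-exactness is inherited upward, exactness passing to subalgebras) and is $1$-subexponential once all the $A_n(\alpha)(\omega)$, $n\ge1$, are (by the routine perturbation argument for finite dimensional subspaces). For almost every $\omega$ we then verify the hypotheses of Theorem \ref{t1} for the random sequence $u_j(m)=Y_j^{(m)}(\omega)$ and the free circular family $\{c_j\}$, which carries the faithful free--product trace and has linearly independent monomials by its Fock-space realization. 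The hypothesis \eqref{ee1}, the identity $q(u_j)=c_j$ in the tracial ultraproduct, and the identification $A(\alpha)(\omega)/I=\cl C$ all come from the a.s.\ strong (hence weak) convergence $\|P(Y^{(m)}(\omega))\|\to\|P(c)\|$, valid by \cite{HT3} after the $2\times2$ reduction of Remark \ref{r2}. The only further property of $\cl C$ used in Theorem \ref{t1}, exactness, holds since $C^*(c_1,\dots,c_n)$ embeds in the Cuntz--Toeplitz algebra $C^*(\ell_1,\dots,\ell_{2n})$, which is nuclear. Since $C_d(m,t)$ uses a supremum over \emph{all} $m'\ge m$, the almost-sure event produced below does not depend on $\alpha$, giving the conclusion for every infinite $\alpha$ at once.

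\textbf{The growth estimate.} To obtain ``$C_d(aN^D,N)\to1$'' we fix $d$, set $d'=\max(d,3)$, and note $C_d\le C_{d'}$ (since $P_d\subseteq P_{d'}$), so only $d'$ need be handled. Choosing a sequence $\vp_i\downarrow0$ and applying Lemma \ref{lem4} with $\vp=\vp_i$, the exceptional probabilities $e\exp(-m^{1/d'}/c_8(n,d',\vp_i))$ are summable in $m$; Borel--Cantelli then gives, a.s., an $M_i(\omega)<\infty$ with $\omega\in\Omega_{d',\vp_i}(m)$ for all $m\ge M_i(\omega)$. Take $a=1$, $D=2d'+1$. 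For $m\ge N^{D}$ and $N$ large (depending on $\omega,i,d'$) the integer $k(m)=O(m^{1/(2d')})$ of Lemma \ref{lem4} exceeds $N$, so on $\Omega_{d',\vp_i}(m)$ every $P\in M_k\otimes P_{d'}$ with $k\le N$ satisfies $\|P(Y^{(m)}(\omega))\|\le(1+\vp_i)(\chi_{d'}(k(m),m)+\vp_i)\|P(c)\|$ (embedding $M_k$ isometrically into a corner of $M_{k(m)}$). Here $k(m)\le m^{\delta}$ for some $\delta<1/4$ (essentially $\delta=1/(2d')\le1/6$), so Theorem \ref{ht} yields $\chi_{d'}(k(m),m)\to1$, whence $\sup_{m\ge M}\chi_{d'}(k(m),m)\to1$ as $M\to\infty$. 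Therefore $\limsup_N C_{d'}(N^{D},N)(\omega)\le(1+\vp_i)^2$ for each $i$, so $C_{d'}(N^{D},N)(\omega)\to1$; intersecting these countably many a.s.\ events over $i$ and $d$ establishes the hypothesis (with $a=1$, $D=2\max(d,3)+1$).

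\textbf{Checking \eqref{eea2} and non-exactness.} By injectivity of $\otimes_{\min}$ and $A(\alpha)(\omega)\subseteq B(\alpha)=\oplus_{m\in\alpha}M_m$,
$$\Big\|\sum\nolimits_1^n u_j(\alpha)\otimes\bar c_j\Big\|_{A(\alpha)(\omega)\otimes_{\min}\bar{\cl C}}=\sup_{m\in\alpha}\Big\|\sum\nolimits_1^n Y_j^{(m)}(\omega)\otimes\bar c_j\Big\|_{M_m\otimes_{\min}\bar{\cl C}}.$$
Writing $\bar c_j=\bar a_j+\bar b_j^{\,*}$ with $\{\bar a_j,\bar b_j\}_{j=1}^n$ a family of $2n$ isometries with mutually orthogonal ranges (conjugates of the creation operators realizing $\{c_j\}$), one gets the elementary bound $\|\sum_j Y_j^{(m)}(\omega)\otimes\bar c_j\|\le\|\sum_j (Y_j^{(m)})^*Y_j^{(m)}\|^{1/2}+\|\sum_j Y_j^{(m)}(Y_j^{(m)})^*\|^{1/2}$; applying the a.s.\ strong convergence above to the degree-$2$ $*$-polynomials $\sum_j X_{n+j}X_j$ and $\sum_j X_jX_{n+j}$, each term on the right tends a.s.\ to $\|\sum_j c_j^*c_j\|^{1/2}=\|\sum_j c_jc_j^*\|^{1/2}=\sqrt n+1$ (the right edge of the free Poisson law of parameter $n$). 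Hence a.s.\ $\limsup_m\|\sum_j Y_j^{(m)}(\omega)\otimes\bar c_j\|\le2(\sqrt n+1)<n=\sum_1^n\tau(|c_j|^2)$, as $n\ge8$. For a.e.\ $\omega$ we then pick $M(\omega)$ with $\sup_{m\in\alpha,\,m\ge M(\omega)}\|\sum_j Y_j^{(m)}(\omega)\otimes\bar c_j\|<n$, put $\beta=\beta(\omega):=\alpha\cap[M(\omega),\infty)$ (infinite), and observe that \eqref{eea2} holds for the $\beta$-indexed system; Theorem \ref{t1} applied with $\beta$ in place of $\NN$ shows $A(\beta)(\omega)$ is not exact, and since $A(\beta)(\omega)$ is a quotient of $A(\alpha)(\omega)$ (restrict the coordinate projection $B(\alpha)\to B(\beta)$) while exactness passes to quotients (Kirchberg), $A(\alpha)(\omega)$ is not exact. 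The self-adjoint case is identical: after Remark \ref{r2}, Lemmas \ref{lem3}, \ref{lem4} and Theorem \ref{ht} are statements about the GUE model, $C^*(s_1,\dots,s_n)\subseteq C^*(\ell_1,\dots,\ell_n)$ is exact, and the displayed estimate becomes $\|\sum_j X_j^{(m)}(\omega)\otimes\bar s_j\|\le2\|\sum_j(X_j^{(m)})^2\|^{1/2}\to2(\sqrt n+1)<n$ for $n\ge8$.

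\textbf{Where the difficulty lies.} The main obstacle is the growth estimate: one must marry the Gaussian concentration of Lemma \ref{lem4} --- which via Borel--Cantelli yields a \emph{single} almost-sure event controlling \emph{all} polynomials of bounded degree with bounded-size coefficients over \emph{all} large $m$ --- with the mean estimate of Theorem \ref{ht}, which tolerates coefficient sizes only up to $m^{1/4}$; this forces the passage to $d'=\max(d,3)$ (so that the admissible size $m^{1/(2d')}$ stays below $m^{1/4}$), and the uniform bookkeeping of the constants $c_7,c_8,\chi_{d'}$ across $d$, $\vp$, $k$ and $m$ is the delicate point. A secondary subtlety is that \eqref{eea2} fails on $\alpha$ itself --- the small blocks $M_m$ contribute random, unbounded norms such as $2\|(Y_j^{(1)}(\omega))_j\|_2$ --- so one is forced to descend to a random tail $\beta(\omega)\subseteq\alpha$ and then lift non-exactness from the quotient $A(\beta)(\omega)$ back to $A(\alpha)(\omega)$.
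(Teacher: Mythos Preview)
Your argument is correct and, for the $1$-subexponentiality, follows the paper's route (Lemma~\ref{lem4} plus Borel--Cantelli plus Theorem~\ref{ht}, feeding into Theorem~\ref{t1}); your passage to $d'=\max(d,3)$ is in fact more careful than the paper, which glosses over the point that for $d\le 2$ the size $k(m)\approx m^{1/(2d)}$ coming out of Lemma~\ref{lem4} is not strictly below $m^{1/4}$ as Theorem~\ref{ht} requires.

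For non-exactness the two proofs genuinely diverge. The paper keeps $\alpha$ fixed but lets $n$ float: it bounds $\big\|\sum_1^n u_j\otimes\bar c_j\big\|_{\min}\le 2\big(\sum_1^n\|u_j(\omega)\|^2\big)^{1/2}$, notes that $\sup_m\|Y_j^{(m)}(\omega)\|<\infty$ a.s.\ (so $\E\|u_1\|^2<\infty$ by Gaussian integrability), and applies Fatou to get $\liminf_{n} n^{-1}\sum_1^n\|u_j(\omega)\|^2<\infty$ a.s.; hence for some $n=n(\omega)$ the right side is $O(\sqrt n)<n$, Theorem~\ref{t1} makes $A_n(\alpha)(\omega)$ non-exact, and non-exactness passes upward. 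You instead freeze $n\ge 8$, use the sharper Fock-space bound $2(\sqrt n+1)$, and handle the uncontrolled small-$m$ blocks by descending to a random tail $\beta(\omega)\subset\alpha$, then invoke Kirchberg's theorem that exactness passes to quotients to lift non-exactness from $A(\beta)(\omega)$ back to $A(\alpha)(\omega)$. Your route yields an explicit threshold ($n=8$) and avoids the integrability/Fatou step, at the price of importing the (standard but non-trivial) quotient-stability of exactness; the paper's route stays entirely inside the hypotheses of Theorem~\ref{t1} as written but needs an $\omega$-dependent $n$. Both are valid.
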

         \begin{proof} We will give the proof for $\a=\NN$. The proof   for a general subset is
         identical. By Lemma \ref{lem4} for any degree        
         $d$ and $\vp>0$ we have 
         $$\sum_m \P(\Omega_{d,\vp}(m)^c)<\infty. $$
         Therefore the set $V_{d,\vp}=\liminf_{m\to \infty} \Omega_{d,\vp}(m)$
         has probability 1. Furthermore  (since we may use a sequence of $\vp$'s tending to zero) we have
         $$\P( \cap_{d,\vp} V_{d,\vp})=1.$$
         Now if we choose $\omega$ in $\cap_{d\ge 1,\vp>0} V_{d,\vp}$ the operators
        $u_j(\a)(\omega)$ satisfy the assumptions 
        of Theorem \ref{t1}, and hence $A(\a)(\omega)$ is $1$-subexponential.\\
        Note that $$   \sum_1^n \tau(|c_j|^2) =n.$$
Since $\|u_j\|=\sup_m \|u_j(m)\|$ and $\lim_{m\to \infty}\|u_j(m)(\omega)\|=2$ a.s.
        we know that $\sup_m \|u_j(m)\|<\infty$ a.s.. Therefore, by concentration
        (or by the integrability of the norm of Gaussian  random vectors, see \cite{Led})
        $$  \E (\|u_j\|^2) =\E (\sup_m \|u_j(m)\|^2)<\infty,$$
        and since the $u_j$'s have the same distribution  $\E (\|u_j\|^2)=\E (\|u_1\|^2)$.
        By Fatou's lemma $$\E \liminf_{n\to \infty} n^{-1} \sum_1^n \|u_j\|^2\le  \liminf_{n\to \infty} \E n^{-1} \sum_1^n \|u_j\|^2=\E( \|u_1\|^2)<\infty$$
         and hence
     there is a measurable set $\Omega_0\subset \Omega$ with $\P(\Omega_0)=1$
        such that
        $$\forall \omega\in \Omega_0\quad \liminf_{n\to \infty} n^{-1} \sum_1^n \|u_j(\omega)\|^2 <\infty.$$ 
        Therefore if we choose $\omega$ in the intersection
        of $ \cap_{d,\vp} V_{d,\vp}\cap \Omega_0$
        (which has probability 1)
        we find almost surely 
         $$\|   \sum_1^n u_j (\omega)\otimes \bar c_j \|_{{\cl A}\otimes_{\min}  \bar {\cl C}}\le
        2\max\{ \|\sum u_j u_j^* \|^{1/2}, \|\sum u_j^* u_j \|^{1/2}\}
        \le   2( \sum_1^n \|u_j(\omega)\|^2)^{1/2} \in O   (\sqrt{n} )$$
         so that \eqref{eea2} is satisfied and $A(\a)(\omega)$ is not exact.\\
         Lastly, since $\{\hat u_j(\a)(\omega)\mid j\in \a\}$ has  the same   distribution as $\{\sqrt{2}\Re u_j(\a)(\omega)\mid j\in \a\}$ the
         random $C^*$-algebra they generate has ``the same distribution" as  $A(\a)(\omega)$, whence the last assertion. 
                \end{proof}
                
            \begin{rem} It seems clear that our results remain valid if we replace $(Y_j^{(m)})$ 
            by an i.i.d. sequence of uniformly distributed $m\times m$ unitary matrices,
           but,
            at the time of this writing, we have not yet  been able to extract the suitable
            estimates (as in Theorem \ref{ht}) from the proof of Collins and Male \cite{CM} that strong convergence holds in this case.
            However, while this would simplify our example, by eliminating the need for estimates of $\|u_j\|$,
            it apparently would not significantly change the picture.
     \end{rem}
        \bigskip

        \n\textbf{Acknowledgment.}  I am  very grateful to Mikael de la Salle for useful remarks.

  \end{document}